\documentclass[11pt]{amsart}
\usepackage{graphicx}
\usepackage{amsaddr}
\usepackage{hyperref}
\usepackage{amsmath,amssymb}
\usepackage{dirtytalk}
\textwidth=6in
\textheight=9.5in
\headheight.2cm
\topmargin-0.5in
\evensidemargin0.25in
\oddsidemargin0.25in
\large\normalsize

\hbadness3000
\vbadness30000
\parindent=0.3in
\parskip=3pt plus 1pt minus 1pt

\providecommand{\ts}{\hspace{15pt}}

\newcommand{\mb}{\mathbb}
\newcommand{\mc}{\mathcal}

\theoremstyle{plain}
\newtheorem{Th}{Theorem}[section]
\newtheorem{Lemma}[Th]{Lemma}
\newtheorem{Cor}[Th]{Corollary}

\theoremstyle{plain}

\numberwithin{equation}{section}
\setcounter{page}{25}

\begin{document}
	\newcommand{\T}{\mathbb{T}}
	\newcommand{\R}{\mathbb{R}}
	\newcommand{\Q}{\mathbb{Q}}
	\newcommand{\N}{\mathbb{N}}
	\newcommand{\Z}{\mathbb{Z}}
	\newcommand{\tx}[1]{\quad\mbox{#1}\quad}
	%Please put here any further newcommands.
	%\hfill  123\\
	\noindent {\tt The Nepali Math. Sc. Report\\
		Vol. 36, No.1, 2019\\}

\title[Maximum Norm Estimates  ]{A priori estimates in terms of the maximum norm for the solution of the Navier-Stokes equations with periodic initial data}

%\author[First Author, Second Author, ...]{First Author $^1$ \and Second Author $^2$}
%\address{ 
	%$^1$ Address of the First Author\\
	%$^2$ Address of the First Author}

\author[Santosh Pathak]{ \bfseries Santosh Pathak}
\address{ Department of Mathematics and Statistics\\
 University of New Mexico, Albuquerque, NM 87131, USA \\
 spathak@unm.edu}

\vspace{-.2cm} 
\thanks{\hspace{-.5cm}\tt
	Received  May 31, 2016
	\hfill }

\maketitle
\thispagestyle{empty}

{\footnotesize
\noindent{\bf Abstract:}  In this paper, we consider the Cauchy problem for the incompressible Navier-Stokes equations in $\mathbb{R}^n$ for $n\geq 3 $  with smooth periodic initial data and derive a priori estimtes of the maximum norm of all derivatives of the solution in terms of the maximum norm of the initial data. This paper is a special case of a paper by H-O Kreiss and J. Lorenz which also generalizes the main result of their paper to higher dimension.
 \\
\noindent{\bf Key Words}: Incompressible Navier-Stokes equation; Maximum norm estimates, Periodic initial data\\
\bf AMS (MOS) Subject Classification.} Classification here.

\section{Introduction}
 We consider the Cauchy problem of the Navier-Stokes equations in $\mb R^n, n \geq 3$: 
\begin{align}
 u_t + u \cdot \nabla u + \nabla p = \triangle u, \ts \nabla \cdot u = 0, 
\end{align}
with initial condition 
\begin{align}
u(x,0)= f(x), \ts x \in  \mb R^n, 
\end{align}
where $u=u(x,t)= (u_1(x,t), \cdots u_n(x,t)) $ and $ p=p(x,t) $ stand for the unknown velocity vector field of the fluid and its pressure, while $f=f(x)=(f_1(x),\cdots  f_n(x) ) $ is the given initial velocity vector field. In what follows, we will use the same notations for the space of vector valued and scalar functions for convenience in writing.
\medskip

There is a large literature on the existence and uniqueness of solution of the Navier-Stokes equations in $\mb R^n $.  For given initial data, solutions of (1.1) and (1.2)  have been constructed in various function spaces. For example, if $f \in L^r $ for some $r$ with $ 3 \leq r < \infty $, then it is well known that there is a unique classical solution in some maximum interval of time $0\leq t < T_f $ where $ 0 < T_f \leq \infty $. But for the uniqueness of the pressure one requires $| p(x,t)| \to 0$ as $|x| \to \infty$. (See \cite{Kato} and \cite{Wiegner}  for $r=3 $ and \cite{Amann}  for $ 3 < r < \infty $.)
\medskip 

If $f\in L ^{\infty}(\mb R^n) $ then existence of a regular solution follows from \cite{Cannon}. The solution is only unique if one puts some growth restrictions on the pressure as $| x| \to \infty$. A simple example of non-uniqueness is demonstrated in \cite{Kim} where the velocity $u$ is bounded  but $|p(x,t)| \leq C |x| $. In addition, an estimate $|p(x,t)|\leq C(1+|x|^{\sigma} ) $ with $\sigma <1$ ( see \cite{Galdi} ) implies uniqueness. Also the assumption $ p \in L^1_{loc}(0,T; BMO) $ (see \cite{Giga}) implies uniqueness. 
\medskip 

In this paper we consider the initial function $f \in C^{\infty}_{per}(\mb R^n) $ which is the space of smooth $2 \pi $  periodic functions.  Since $  C^{\infty}_{per}(\mb R^n) $ is a closed subspace of the Banach space $ L^{\infty}(\mb R^n) $, the existence of a regular solution of the Navier-Stokes equations (1.1) and (1.2) can be guaranteed by \cite{Cannon}. In addition, in a paper by Giga and others \cite{Giga} they consider $f \in BUC(\mb R^n)$ where  $ BUC(\mb R^n)  $ is the space of bounded uniformly continuous functions. In the paper they construct a regular solution of the Navier-Stokes equations in some maximum interval of time $ 0 \leq t < T_f $ where $ T_f \leq \infty $. Clearly, our case is also a special case of their paper where we put extra assumption of \say{smooth periodic} on their initial function $f \in BUC(\mb R^n)$. Moreover, for smooth periodic initial data, the existence of smooth periodic solution is proved  by H-O Kreiss and J. Lorenz in their book \cite{Kreiss} for $n=3$ where they use successive iteration using the the vorticity formulation. On the other hand, Giga and others use iteration on the integral equation of the transformed abstract ordinary differential equations to construct a mild solution of the Navier-Stokes equations and later prove such mild solution is indeed a regular solution (local in time)  of the Navier-Stokes equations (1.1) and (1.2) for $f \in BUC(\mb R^n)  $ . Readers are referred to the paper by Giga and others  \cite{Giga} for details on existence of the smooth periodic solution of the Navier-Stokes equations of (1.1) and (1.2) for $ f \in C^{\infty}_{per}(\mb R^n)$ with necessary alternations in their proofs of case $ f \in BUC(\mb R^n)$.
\medskip 

The work in this paper reproves  Theorem 4.1 of the Kreiss and Lorenz paper \cite{Lorenz}  in  periodic case assuming smooth periodic solution exists for some maximum interval of time $ 0 \leq t < T_f$. Since we are in a special case of their paper, result of Theorem 4.1 must be true for smooth periodic initial data  as well, but what makes our work interesting and different is the approach taken to handle the pressure term of the Navier-Stokes equations while deriving the result of Theorem 4.1 of the Kreiss and Lorenz paper as I have adopted in my first paper \cite{SP}   Notice, pressure term of the Navier-Stokes equations  can be determined from the Poisson equation 
\begin{align}
\triangle p = - \nabla \cdot ( u \cdot \nabla ) u 
\end{align}
which is given by 
\begin{align}
p = \sum_{i,j} R_i R_j ( u_i u_j) ,
\end{align}
where $R_i = (-\triangle )^{-1/2} D_i  $ is the $i$-th Riesz transform. Since the Riesz transforms are not bounded in $L^{\infty}(\mb R^n)$, the pressure term $p \in L^1_{loc} (0,T;BMO) $ where $ BMO$ is the space of functions of bounded mean oscillation. Because of the non-local nature of the pressure, the proof of Theorem 4.1 of the Kreiss and Lorenz paper  is complicated, however.
\medskip 

The main objective of this paper is to derive a priori estimates of the maximum norm of the derivatives of $u$ in terms of the maximum norm of the initial function, $u(x,0)=f(x)$, {\it assuming} the solution to exist and to be $C^{\infty}_{per}(\mb R^n), n \geq 3 $ for $ 0 \leq t < T_f$. Before we start formulating the problem, we introduce the following notations 

\begin{align*}
| f|_{\infty} = \mathop{\sup}_{x} { |f(x)|}  \ts \text{with} \ts |f(x)|^2= \sum_{i} f_i^2(x), 
\end{align*}
and $D^{\alpha} =D_1^{\alpha_1} \cdots D_n^{\alpha_n},  D_i = \partial /{\partial x_i } \hspace{5pt} \text{for a multiindex} \hspace{5pt} \alpha =(\alpha_1,\cdots, \alpha_n )$.
In what follows, for any $j=0,1,\cdots $, if  $ | \alpha | = j $ then we will denote $D^{\alpha} $ by $D^j$. We also set 
\begin{align*}
|\mathcal {D}^j u(t) |_{\infty} :=|\mathcal {D}^j u(\cdot, t ) |_{\infty} =\mathop{\max}_{|\alpha|=j}  |D^{\alpha} u(\cdot,t)|_{\infty} .
\end{align*}
Clearly, $|\mathcal{D}^j u(t) |_{\infty} $ measures all space derivatives of order $j$ in maximum norm.
\medskip 

Proving the following  theorem is the main goal of this paper whereas Kreiss and Lorenz in their paper \cite{Lorenz} prove the same theorem for $ f \in L^{\infty}(\mb R^n) $ for $n=3$ from rather difficult approach while dealing with the pressure term $p(x,t)$.
\bigskip 

\begin{Th}
Consider the Cauchy problem for the Navier-Stokes equations (1.1), (1.2), where $ f \in C^{\infty}_{per}  ( \mb R^n)$  for $n \geq 3$ with  $  \nabla  \cdot f = 0 $. There is a constant $c_0>0$ and for every $j=0,1,\cdots $ there is a constant $K_j$ so that 
\begin{align}
t^{j/2} | \mc D^j u(t) |_{\infty} \leq K_j | f|_{\infty} \ts \text{for} \ts 0 < t \leq  \frac{c_0}{|f|_{\infty}^2}.
\end{align}
The constants $c_0$ and $K_j$ are independent of $t$ and $f$.
\end{Th}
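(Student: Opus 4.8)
The plan is to pass to the mild (integral) formulation. Eliminating the pressure through the Leray--Helmholtz projection $\mb P$, or equivalently through the representation $p=\sum_{i,j}R_iR_j(u_iu_j)$ recorded in the introduction, Duhamel's principle rewrites (1.1)--(1.2) as
\[
u(t)=e^{t\triangle}f-\int_0^t e^{(t-s)\triangle}\,\mb P\,\nabla\cdot\bigl(u\otimes u\bigr)(s)\,ds,
\]
where the quadratic term is put in divergence form using $\nabla\cdot u=0$. For each $j$ I would then fix the weighted quantity
\[
M_j(t):=\sup_{0<s\le t}s^{j/2}\,\abs{\mc D^j u(s)}_\infty
\]
and prove $M_j(t)\le K_j\abs{f}_\infty$ by induction on $j$ on the interval $0<t\le c_0/\abs{f}_\infty^2$, fixing the single small constant $c_0$ at the very end.

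The engine of the argument is a family of kernel estimates for the composite \emph{Oseen} operator $e^{\tau\triangle}\mb P\,\nabla\cdot$. Since it is a convolution, its $L^\infty\to L^\infty$ norm equals the $L^1$ norm of its kernel, and by the parabolic scaling (the factor $\nabla\cdot$ contributing one derivative) I expect
\[
\bigl\|D^m\,e^{\tau\triangle}\mb P\,\nabla\cdot\bigr\|_{L^\infty\to L^\infty}\le C_m\,\tau^{-(m+1)/2},\qquad \tau>0.
\]
Proving this is the main obstacle, and it is exactly the point the introduction flags: the Riesz transforms hidden inside $\mb P$ are unbounded on $L^\infty$, so $\nabla p$ cannot be estimated pointwise on its own. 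The resolution is never to separate $\mb P$ from the smoothing semigroup; one shows that, although $\mb P$ alone is unbounded, the kernel of $e^{\tau\triangle}\mb P\,\nabla\cdot$ and each of its spatial derivatives is genuinely integrable with the scaling above, which I would verify through the explicit Riesz representation of $p$ in the periodic setting. Granting this, the base case $j=0$ is immediate: the integral equation gives $M_0(t)\le \abs{f}_\infty+C\,t^{1/2}M_0(t)^2$, and a routine continuity argument yields $M_0(t)\le 2\abs{f}_\infty$ once $c_0$ is small.

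For the inductive step I would apply $D^\alpha$ with $\abs{\alpha}=j$ to the integral equation. The linear term is controlled at once, since $s^{j/2}\abs{D^\alpha e^{s\triangle}f}_\infty\le C_j\abs{f}_\infty$. For the nonlinear term I would split the time integral as $\int_0^{t/2}+\int_{t/2}^t$ and distribute the $j$ derivatives of $D^\alpha$ between the Oseen kernel and the factor $u\otimes u$ differently on the two pieces. On $[t/2,t]$, where $s\sim t$, I place all $j$ derivatives on $u\otimes u$ and use $(t-s)^{-1/2}$ from the kernel; Leibniz expands $D^\alpha(u\otimes u)$ into a sum of products $\mc D^b u\cdot\mc D^{j-b}u$, the interior terms being handled by the induction hypothesis, while the two extreme terms produce a factor $\abs{u}_\infty\abs{\mc D^j u}_\infty$ that feeds $M_j$ back linearly. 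On $[0,t/2]$, where $t-s\sim t$, I keep at most one derivative on $u\otimes u$ and move the rest onto the kernel, so the time singularities $s^{-m/2}$ with $m\le 1$ stay integrable. A direct evaluation of the resulting Beta-type integrals shows each piece scales like the correct power of $t$.

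Combining the pieces, multiplying by $t^{j/2}$, and invoking the induction hypothesis for all lower orders yields an inequality of the shape
\[
M_j(t)\le C_j\abs{f}_\infty+C\,t^{1/2}\abs{f}_\infty^2+C\,t^{1/2}\abs{f}_\infty\,M_j(t).
\]
The crucial gain is that every nonlinear contribution carries a factor $t^{1/2}\abs{f}_\infty$, which on $0<t\le c_0/\abs{f}_\infty^2$ is at most $\sqrt{c_0}$. Choosing $c_0$ so small that $C\sqrt{c_0}\le\tfrac12$ lets me absorb the self-referential last term into the left-hand side and bound the quadratic term by $\sqrt{c_0}\,\abs{f}_\infty$, giving $M_j(t)\le K_j\abs{f}_\infty$ with $K_j$ depending only on $j$ and the dimension. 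Because the constant multiplying the self-term is independent of $j$, the same $c_0$ works at every stage, so the induction closes and (1.5) follows. Beyond the kernel estimate, the one point demanding genuine care is verifying that the extreme Leibniz terms on $[t/2,t]$ really enter linearly in $M_j$, so that the absorption step is legitimate.
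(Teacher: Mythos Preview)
Your overall architecture coincides with the paper's: mild formulation via the Leray projector, the Oseen-type kernel bound
\[
\bigl\|D^m e^{\tau\triangle}\mb P\,\nabla\cdot\bigr\|_{L^\infty\to L^\infty}\le C_m\,\tau^{-(m+1)/2}
\]
(this is exactly Lemma~2.1 of the paper, proved there by writing $|\xi|^{-2}e^{-t|\xi|^2}=\int_t^\infty e^{-\tau|\xi|^2}\,d\tau$ on the Fourier side), the split $\int_0^{t/2}+\int_{t/2}^t$, and the Leibniz expansion on the near piece with only one derivative left on the kernel. Your identification of the extreme Leibniz terms as the sole linear feedback in $M_j$ is also exactly what the paper does.

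The one genuine gap is the assertion that ``the constant multiplying the self-term is independent of $j$, so the same $c_0$ works at every stage.'' With your estimates this is not true. After multiplying by $t^{j/2}$, the self-referential contribution from $[t/2,t]$ is bounded by
\[
2C_1\,\abs{f}_\infty\,M_j(t)\;t^{j/2}\!\int_{t/2}^t (t-s)^{-1/2}s^{-j/2}\,ds,
\]
and on $[t/2,t]$ one only has $s^{-j/2}\le 2^{j/2}t^{-j/2}$, so the integral is at most $2^{(j+1)/2}t^{1/2}$. Hence the absorption constant carries a factor $2^{j/2}$, and the requirement $C\sqrt{c_0}\le\tfrac12$ cannot be met with a single $c_0$ for all $j$. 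The paper does \emph{not} claim $j$-independence here: it accepts a $j$-dependent constant $C_j$, obtains the bound first on the shorter interval $0\le t\le c_j/\abs{f}_\infty^2$ with $c_j=\min\{c_0,1/(4C_j^2)\}$, and then \emph{restarts} the estimate from time $t-T_j$ (using the already-proved bound $\abs{u(t-T_j)}_\infty\le 2\abs{f}_\infty$) to propagate it across the full interval $0\le t\le c_0/\abs{f}_\infty^2$, at the price of an extra factor $(c_0/c_j)^{j/2}$ in $K_j$. Without this restart step your induction closes only on a $j$-dependent time interval, which does not give the theorem as stated.
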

\bigskip

For the purpose of proving Theorem 1.1, we start by transforming the momentum equation (1.1) of  the Navier-Stokes equations  into the abstract ordinary differential equation for $u$
\begin{align}
u_t = \triangle u - \mb P (u \cdot \nabla ) u 
\end{align}
by eliminating the pressure, where $\mb P $ is the Leray projector  defined by 
\begin{align*}
\mb P = (  \mb P_{ij})_{1\leq i,j \leq n } , \ts  \mb P_{ij} = \delta_{ij} + R_i R_j; 
\end{align*}
where $ R_i$ is same as  in (1.4) and $ \delta_{ij}$ is the Kronecker delta function. Note that the equation (1.6) is obtained from (1.1 ) by applying the Leray projector with the properties $\mb P( \nabla p)=0, \mb P ( \triangle u ) = \triangle u$, since $\nabla \cdot u = 0 $. 
\medskip 

Since $ \mb P (u \cdot \nabla u)= \sum_{i} D_i \mb P(u_i u)$, therefore it is very appropriate to consider an analogous  system of (1.6) as below:
\begin{align}
u_t = \triangle u + D_i \mb P g(u)  \ts x \in \mb R^n, \ts t>0
\end{align}
with initial condition
\begin{align}
u(x,0)= f(x) \ts \text{where} \ts f \in C^{\infty}_{per}(\mb R^n).
\end{align}
Here $ g: \mb R^n \to \mb R^n $ is assumed to be quadratic in $u$. The maximal interval of existence is again $ 0 \leq t < T_f$. We would like to prove the estimates of the maximum norm of the derivatives of the solution of (1.7) and (1.8)  in terms of the maximum norm of the initial data.

\bigskip

\begin{Th} \label{main}  Under the above assumptions on $f$ and $g$ the solution of (1.7) and (1.8) satisfies the following  \\
(a) There is a constant $c_0 >0$ with 
\begin{align}
T(f) >  \frac{c_0}{|f|_{\infty}^2} 
\end{align}
and  
\begin{align}
|u(t)|_{\infty} \leq 2 |f|_{\infty} \ts \text{for} \ts  0\leq t \leq \frac{c_0}{|f|_{\infty}^2} . 
\end{align}
(b) For every $j=1,2,\cdots $, there is a constant $K_j>0$ with 
\begin{align}
t^{j/2} | \mc{D}^j u(.,t)|_{\infty} \leq K_j |f|_{\infty}  \ts \text{for}  \ts 0< t  \leq  \frac{c_0}{|f|_{\infty}^2}.
\end{align}
The constant $c_0$ and $K_j$ are independent of $t$ and $f$.
\end{Th}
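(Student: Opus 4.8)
The plan is to work from the Duhamel (variation-of-constants) representation attached to (1.7)--(1.8). Writing $S(t)=e^{t\triangle}$ for the heat semigroup, any smooth solution satisfies
\begin{align*}
u(t) = S(t)f + \int_0^t S(t-s)\,D_i\mb P\,g(u(s))\,ds .
\end{align*}
Everything rests on two families of maximum-norm kernel estimates. First, the heat semigroup is a contraction and smoothing: $|S(t)h|_\infty\le|h|_\infty$ and $|\mc D^k S(t)h|_\infty\le C_k t^{-k/2}|h|_\infty$. Second, and crucially, the \emph{combined} operator $\mc D^k S(t)D_i\mb P$ is convolution against an $L^1$ kernel with the parabolic scaling
\begin{align*}
|\mc D^k S(t)\,D_i\mb P\,h|_\infty \le C_k\,t^{-(k+1)/2}\,|h|_\infty .
\end{align*}
The point is that the single derivative $D_i$ together with the heat factor regularizes the otherwise $L^\infty$-unbounded Riesz transforms hidden inside $\mb P$; this is precisely where the pressure is controlled, and I would establish it by the method of my earlier paper \cite{SP} rather than through the $BMO$ machinery.

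For part (a) I would run a continuation (bootstrap) argument. Since $g$ is quadratic, $|g(u(s))|_\infty\le c\,|u(s)|_\infty^2$. Suppose on a maximal subinterval that $|u(s)|_\infty\le 2|f|_\infty$. Feeding this into the Duhamel formula with the $k=0$ kernel estimate and $\int_0^t(t-s)^{-1/2}\,ds=2t^{1/2}$ gives, after absorbing the quadratic constant into $C$,
\begin{align*}
|u(t)|_\infty \le |f|_\infty + C\!\int_0^t (t-s)^{-1/2}|u(s)|_\infty^2\,ds \le |f|_\infty + 8C|f|_\infty^2\,t^{1/2}.
\end{align*}
Choosing $c_0$ so small that $8C\sqrt{c_0}\le 1$ forces $|u(t)|_\infty<2|f|_\infty$ strictly whenever $t\le c_0/|f|_\infty^2$, so the bound cannot first be saturated inside that interval; standard continuation then yields both $T(f)>c_0/|f|_\infty^2$ and (1.10).

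For part (b) I would argue by induction on $j$, controlling $M_j(t):=\sup_{0<s\le t}s^{j/2}|\mc D^j u(s)|_\infty$. Applying $D^\alpha$ with $|\alpha|=j$ to the Duhamel formula, the linear term obeys $|\mc D^j S(t)f|_\infty\le C_j t^{-j/2}|f|_\infty$ at once. In the nonlinear term I would split $\int_0^t=\int_0^{t/2}+\int_{t/2}^t$ and distribute the $j$ derivatives differently on each piece. On $[0,t/2]$ the kernel $S(t-s)D_i\mb P$ is nonsingular, so I would let all $j$ derivatives fall on it (using the combined estimate with $k=j$), needing only $|g(u(s))|_\infty\le c|u(s)|_\infty^2\le 4c|f|_\infty^2$ from part (a). On $[t/2,t]$ I would instead keep the integrable factor $(t-s)^{-1/2}$ and expand $\mc D^j g(u)$ by the Leibniz rule into products $(\mc D^\beta u)(\mc D^\gamma u)$ with $|\beta|+|\gamma|=j$; every term with $|\beta|,|\gamma|<j$ is handled by the inductive hypothesis, while the two borderline terms carrying all $j$ derivatives on a single factor are absorbed into the left-hand side using the smallness of $c_0$ (so that their prefactor is $\le\tfrac12$). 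Collecting the pieces and carrying out the resulting Beta-type time integrals yields $M_j(t)\le K_j|f|_\infty$, which fixes $K_j$.

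The main obstacle is the second kernel estimate, namely controlling $\mc D^k S(t)D_i\mb P$ in the maximum norm despite the unboundedness of the Riesz transforms on $L^\infty$; this is the analytic heart of the argument and the only place the pressure genuinely enters. The remaining difficulty is purely organizational: keeping the induction bookkeeping in (b) consistent so that each term's time weight integrates to a finite constant and the top-order term is absorbed, which the $t/2$ splitting and the quadratic (hence at most two-factor) form of $g$ make manageable.
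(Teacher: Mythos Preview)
Your plan matches the paper's proof almost step for step: the Duhamel representation, the kernel estimate $|\mc D^k S(t)D_i\mb P h|_\infty\le C_k t^{-(k+1)/2}|h|_\infty$ (this is exactly the paper's Lemma 2.1), the bootstrap/continuation argument for part (a), and for part (b) the induction on $j$ with the $\int_0^{t/2}+\int_{t/2}^t$ split, all derivatives on the kernel in the first piece, and Leibniz expansion of $\mc D^j g(u)$ in the second.

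There is one point that, as written, would not close. When you say the top-order Leibniz terms are ``absorbed into the left-hand side using the smallness of $c_0$'', note that the coefficient multiplying $M_j(t)$ is of order $C|f|_\infty\int_{t/2}^t (t-s)^{-1/2}s^{-j/2}\,ds\cdot t^{j/2}$, and since $s^{-j/2}\le (t/2)^{-j/2}$ on $[t/2,t]$ this carries a factor $2^{j/2}$; the resulting inequality is $\phi(t)\le C_j|f|_\infty + C_j|f|_\infty t^{1/2}\max_{0\le s\le t}\phi(s)$ with $C_j$ genuinely growing in $j$. Making $C_j|f|_\infty t^{1/2}\le\tfrac12$ therefore forces $t\le c_j/|f|_\infty^2$ with $c_j=\min\{c_0,1/(4C_j^2)\}$ depending on $j$, whereas $c_0$ in the statement is fixed once and for all in part (a). The paper deals with this by first proving the bound on the shorter interval $0\le t\le c_j/|f|_\infty^2$ and then, for $c_j/|f|_\infty^2<t\le c_0/|f|_\infty^2$, restarting the estimate from time $t-c_j/|f|_\infty^2$ (part (a) gives $|u(t-c_j/|f|_\infty^2)|_\infty\le 2|f|_\infty$ for the new initial datum) and rescaling, which yields (1.11) with the larger constant $K_j=4C_j(c_0/c_j)^{j/2}$. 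Add this restart step and your argument is complete and essentially identical to the paper's.
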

\medskip 

In section 2, we will introduce some auxiliary results for the solution of the heat equation and few other important estimates which are used later in section 3 and 4. Proof of Theorem 1.2 will be provided in section 3. Then we prove Theorem 1.1 in  section 4. Finally, in section 5  we outline some remarks on the use of the result  obtained in Theorem 1.1.
\bigskip

\section{Some Auxiliary results} 
\bigskip
Let us consider $f\in C^{\infty}_{per} ( \mb R^n) $. The solution of 
\begin{align*}
u_t = \triangle u , \ts u = f \ts \text{at} \ts t=0,
\end{align*}
is denoted by 
\begin{align*}
u(t):=u(\cdot, t ) = e^{ \triangle t } f =  \frac{1}{(2\pi)^n} \int_{\mb T^n} \theta (x-y,t) f(y) dy 
\end{align*}
where 
\begin{align}
\theta(x,t)=\sum_{ k \in \mathbb{Z}^n} e^{-|k|^2 t } e^{ik \cdot x}  , \ts t>0
\end{align}
is the periodic heat kernel in $\mb R^n$.
Using the Poisson summation formula, (2.1) can be written as 
\begin{align}
\theta(x,t)= \sum_{k\in \mathbb{Z}^n} \bigg(\frac{\pi}{t}\bigg)^n \exp\bigg[\frac{-|x+2\pi k|^2}{4t} \bigg], \ts t > 0. 
\end{align}
With the use of (2.2), it is well known that 
\begin{align}
|e^{t \triangle } f |_{\infty} \leq |f|_{\infty}, \ts t\geq 0 
\end{align}
and 
\begin{align}
| \mc D^j e^{t \triangle} f |_{\infty} \leq C_j t^{-j/2} | f |_{\infty} 
\end{align}
for some $C_j>0$ independent of $t$ and $f$.

\bigskip

\begin{Lemma} 
 Let $ f \in C^{\infty}_{per}( \mb R^n ) $ then  for any $j \geq 1 $ 
\begin{align}
| \mc D^j e^{\triangle t } \mb P f  |_{\infty} \leq C_j t^{-j/2} | f |_{\infty} \ts \text{for} \ts t>0 
 \end{align}
  for some constant $C_j>0$ independent of $t$ and $f$.
\end{Lemma}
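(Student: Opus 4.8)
The plan is to reduce everything to the bare heat-semigroup derivative estimate (2.4). Writing the Leray projector componentwise, $(\mathbb{P}f)_i = f_i + \sum_j R_i R_j f_j$, and recalling that $R_i R_j = D_i D_j(-\triangle)^{-1}$, I would split the operator $\mathcal{D}^j e^{\triangle t}\mathbb{P}$ into its diagonal part and its Riesz part. The diagonal part $\mathcal{D}^j e^{\triangle t}$ is controlled immediately by (2.4), so the entire difficulty sits in the terms $D^{\alpha} e^{\triangle t} R_i R_j f$ with $|\alpha| = j$. The subtlety to keep in mind throughout is that the Riesz transforms are \emph{not} bounded on $L^{\infty}$, so one cannot simply estimate $|\mathbb{P}f|_{\infty}$ by $|f|_{\infty}$; the extra smoothing of the heat kernel must be exploited.

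The key identity I would use is that, on the torus and for $t>0$,
\[
e^{\triangle t} R_i R_j = D_i D_j \int_t^{\infty} e^{\triangle s}\,ds ,
\]
equivalently $(-\triangle)^{-1} e^{\triangle t} = \int_t^{\infty} e^{\triangle s}\,ds$ on functions of zero mean, which is exactly where $R_i R_j$ always lands since $D_i D_j$ annihilates the $k=0$ Fourier mode. This is verified instantly on the Fourier side: the symbol of $e^{\triangle t} R_i R_j$ is $\frac{-k_i k_j}{|k|^2} e^{-|k|^2 t}$, while $\int_t^{\infty} e^{-|k|^2 s}\,ds = \frac{1}{|k|^2} e^{-|k|^2 t}$ for $k\neq 0$, and the operator $D_i D_j$ supplies the factor $-k_i k_j$; at $k=0$ both sides vanish. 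Applying $D^{\alpha}$ therefore gives
\[
D^{\alpha} e^{\triangle t} R_i R_j f = \int_t^{\infty} D^{\alpha} D_i D_j e^{\triangle s} f \, ds ,
\]
which transfers the two ``bad'' Riesz derivatives onto the heat semigroup, where (2.4) applies.

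It then remains to estimate the integral. Since $D^{\alpha} D_i D_j$ is a derivative of total order $j+2$, estimate (2.4) yields $|D^{\alpha} D_i D_j e^{\triangle s} f|_{\infty} \leq C_{j+2}\, s^{-(j+2)/2}\, |f|_{\infty}$ uniformly in $s$, so that
\[
|D^{\alpha} e^{\triangle t} R_i R_j f|_{\infty} \leq C_{j+2}\,|f|_{\infty} \int_t^{\infty} s^{-(j+2)/2}\,ds = \frac{2\,C_{j+2}}{j}\, t^{-j/2}\,|f|_{\infty}.
\]
The improper integral converges precisely because $(j+2)/2 > 1$, i.e.\ because $j \geq 1$; this is exactly where the hypothesis $j\geq 1$ of the statement is used. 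Summing over the finitely many indices $i,j$ and adding the diagonal contribution from (2.4) gives the claimed bound with a constant $C_j$ depending only on $n$ and on the heat-estimate constants from (2.4) of orders $j$ and $j+2$, hence independent of $t$ and $f$.

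The one place that needs genuine care — the main obstacle — is justifying the term-by-term manipulations: differentiation under the integral sign, the interchange of $\mathcal{D}^j$ with the $s$-integral, and absolute convergence of the $s$-integral near $s=\infty$. For smooth periodic $f$ all of this follows from the rapid decay of the Fourier coefficients of $f$ together with the Gaussian factor $e^{-|k|^2 s}$, which renders every series and integral uniformly convergent. I would accordingly establish the operator identity coefficient-by-coefficient for each fixed $k$ and only then pass back to physical space, so that no unbounded operator is ever applied to a function outside its natural domain.
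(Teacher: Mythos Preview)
Your proposal is correct and is essentially identical to the paper's own proof: the paper also splits $\mathbb{P}_{kl}$ into the Kronecker-delta part and the Riesz part, rewrites the Riesz contribution via the same Fourier identity $e^{-t|\xi|^2}/|\xi|^2=\int_t^\infty e^{-\tau|\xi|^2}\,d\tau$ to obtain $\int_t^\infty D^\alpha D_k D_l e^{\triangle\tau}f\,d\tau$, and then applies (2.4) at orders $j$ and $j+2$ exactly as you do. Your write-up is slightly more explicit about why $j\ge 1$ is needed and about the justification of the interchanges, but the argument is the same.
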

\begin{proof}
Let us first denote $e^{\triangle t} f = \theta * f  $ where $\theta(x,t) $ is given by (2.2). Let us denote the Fourier coefficient of a function by $\mc F$. For  $ \xi \in \mb Z ^n$, notice $ \mc F ( {\theta (x,t)}) (\xi ) = C e^{-t |\xi|^2 },   t > 0 $, where $C$ depends on the  normalizing constant in the definition of the Fourier coefficient. In the proof of this lemma, we will allow the constant $C$ to change line to line as per the need.  Now, for any $t > 0$, any choice of $k,l \in \{1,2, \cdots , n  \}$; and for any multiindex $ \alpha $ such that $ |\alpha|=j$, the operator $D^j e^{\triangle t}  \mb P_{k l}  $ on the Fourier side is given by 
\begin{align*}
\mc F ( D^j e^{ \triangle t}  \mb P_{kl} f_l)(\xi)  & =(-i \xi)^{\alpha} \mc{F}(e^{ \triangle t} \mb P_{kl} f_l )(\xi) \\
&= (-i \xi)^{\alpha}  \mc F( \theta * \mb P_{kl} f_l ) (\xi) \\ 
& = C (-i \xi)^{\alpha}  \mc{F}(\theta(x,t))(\xi) \mc{F}(\mb P_{kl} f_l )(\xi) \\
&=C (-i \xi)^{\alpha} e^{-t |\xi|^2 } \bigg( \delta_{kl} - \frac{\xi_k \xi_l}{| \xi |^2} \bigg) \mc F (f_l) (\xi) \\
& = C (-i \xi)^{\alpha} e^{-t |\xi |^2 } \delta_{k l} \mc F (f_l) (\xi)\\
& -C  (-i \xi)^{\alpha}
 \xi_k \xi_l \mc F (f_l) (\xi)  \int_t^{\infty} e^{-\tau | \xi |^2 } d\tau. 
\end{align*}
Using Fourier expansion we can write 
\begin{align*}
D^j( e^{ \triangle t} \mb P_{kl} f_l)(x) & =C \sum_{\xi \in \mb Z^n}   (-i\xi)^{\alpha} \delta_{kl} e^{-t |\xi|^2} \mc F (f_l)(\xi) e^{i \xi \cdot x } \\
&+ C \sum_{\xi \in \mb Z^n}  (-i\xi)^{\alpha} (i\xi_k) (i\xi_l) \mc F(f_l) (\xi) e^{i \xi \cdot x } \int_t^{\infty}  e^{-\tau  |\xi|^2 } d\tau  \\
& = (-1)^j C  \delta_{kl} D^{\alpha} \sum_{\xi \in \mb Z^n}   e^{-t |\xi|^2} \mc F(f_l) (\xi) e^{i \xi \cdot x }  \\
&+ C (-1)^j \int_t^{\infty} \sum_{\xi \in \mb Z^n} e^{-\tau  | \xi|^2 } (i \xi)^{\alpha}  (i\xi_k) (i\xi_l)  \mc F (f_l) (\xi) e^{i \xi \cdot x } d\tau \\
&= (-1)^j C \delta_{kl}  D^{\alpha} e^{ \triangle t} f_l + (-1)^j  C  \int_t^{\infty} D^{\alpha} D_k D_l e^{ \triangle \tau} f_l d\tau \\
&=I_1 + I_2.
\end{align*}
From (2.4) we have  $|I_1|_{\infty} \leq C_j t^{-j/2} |f_l |_{\infty} $.
\medskip
By the use of (2.4) one more time we obtain 
\begin{align*}
|I_2 |_{\infty}  &  \leq  C_j   |f_l|_{\infty} \int_t^{\infty} \tau^{-(j+2)/2} d\tau \\
& \leq C_j  t^{-j/2} | f_l |_{\infty} .
\end{align*}
Therefore 
\begin{align*}
| D^j e^{ \triangle t} \mb P_{kl} f_l  |_{\infty} &  \leq | I_1|_{\infty} + | I_2|_{\infty} \\
& \leq C_j t^{-j/2} |f_l |_{\infty}.
\end{align*}
Hence Lemma 2.1 is proved.
 \end{proof}

 \medskip 
 \begin{Cor}
 Let $ g\in C^{\infty}_{per} ( \mb R^n  \times [0,T] ) $ for some $T>0$, then the solution of
 \medskip
 \begin{align}
 u_t = \triangle u + D_i \mb P g , \ts u = 0 \ts \text{at} \ts t=0  
 \end{align}
 satisfies 
 \begin{align}
  |u(t)|_{\infty} \leq C t^{1/2} \mathop{\max}_{0\leq s \leq t } |g(s) |_{\infty}. 
 \end{align}
 
\end{Cor}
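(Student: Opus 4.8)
The plan is to represent the solution of the inhomogeneous problem (2.6) by Duhamel's principle and then reduce everything to the single-derivative case already established in Lemma 2.1. Since $u=0$ at $t=0$, the variation-of-constants formula gives
\begin{align*}
u(t) = \int_0^t e^{\triangle (t-s)} D_i \mb P g(s)\, ds.
\end{align*}
Because $e^{\triangle \tau}$, $D_i$, and $\mb P$ are all Fourier multipliers on the torus, they commute, so I would rewrite the integrand as $D_i e^{\triangle (t-s)} \mb P g(s)$, i.e. as a single spatial derivative of the heat semigroup applied to $\mb P g(s)$. This is precisely the object controlled by Lemma 2.1 with $j=1$.

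Next I would estimate the integrand pointwise in $s$: for each fixed $s \in [0,t)$, Lemma 2.1 yields
\begin{align*}
\big| D_i e^{\triangle (t-s)} \mb P g(s) \big|_{\infty} \leq C_1 (t-s)^{-1/2} |g(s)|_{\infty} \leq C_1 (t-s)^{-1/2} \max_{0\leq \sigma \leq t} |g(\sigma)|_{\infty}.
\end{align*}
Taking absolute values inside the integral and pulling the maximum of $|g(\cdot)|_{\infty}$ out of the time integration, I obtain
\begin{align*}
|u(t)|_{\infty} \leq C_1 \Big( \max_{0\leq s \leq t} |g(s)|_{\infty} \Big) \int_0^t (t-s)^{-1/2}\, ds = 2 C_1\, t^{1/2} \max_{0\leq s \leq t} |g(s)|_{\infty},
\end{align*}
using $\int_0^t (t-s)^{-1/2}\, ds = 2 t^{1/2}$. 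Setting $C = 2C_1$ gives exactly (2.7).

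The one point that genuinely requires attention is the integrability of the kernel $(t-s)^{-1/2}$ at the endpoint $s=t$, where the bound from Lemma 2.1 blows up; but the exponent $-1/2 > -1$ keeps the time integral convergent, so the singularity is harmless. A secondary technical point is the rigorous justification of the Duhamel representation itself, including the interchange of the derivative $D_i$ with the time integral in the presence of this mild singularity. I expect this to be the only mildly delicate step, and it is standard for the smooth periodic data assumed here: it follows from the smoothness of $g$ on $\mb R^n \times [0,T]$ together with the rapidly decaying Fourier coefficients of the periodic heat kernel, which make all the relevant series and integrals absolutely convergent.
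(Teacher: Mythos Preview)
Your proof is correct and follows essentially the same route as the paper: Duhamel representation, commute $D_i$ through the heat semigroup, apply Lemma~2.1 with $j=1$, and integrate the resulting $(t-s)^{-1/2}$ kernel. Your write-up is in fact more careful than the paper's, since you explicitly evaluate $\int_0^t (t-s)^{-1/2}\,ds = 2t^{1/2}$ and flag the integrability of the endpoint singularity and the justification of the Duhamel formula.
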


 \begin{proof}
The solution of (2.6) is given by 
 \begin{align*}
 u(t) = \int_0^t e^{\triangle (t-s) } D_i  \mb P g(u))(s)  ds 
 \end{align*}
 and 
 \begin{align*}
 |u(t)|_{\infty} \leq \int_0^t | e^{\triangle (t-s) }  D_i \mb P g(u)(s) |_{\infty} ds. 
 \end{align*}
After commuting  $D_i $ with the heat semi-group, we can use Lemma 2.1 to obtain
 \begin{align*}
 |u(t)|_{\infty} \leq \mathop{\max}_{0\leq s \leq t } |g(s)|_{\infty} \int_0^t (t-s)^{-1/2} ds .
 \end{align*}
 Hence we obtain 
 \begin{align*}
 |u(t)|_{\infty} \leq C t^{1/2} \mathop{\max}_{0\leq s \leq t } |g(s) |_{\infty} .
 \end{align*}
  \end{proof}

 \section{Estimates for  $u_t = \triangle u + D_i \mb P g(u) $ : proof of Theorem 1.2}
  \bigskip
In this section we consider the system  $u_t = \triangle u + D_i \mb P g(u) $ with the initial condition $u=f $ at $ t=0$ where $ f\in C^{\infty}_{per}(\mb R^n) $. It is well-known that the solution is smooth $2\pi$ periodic in a maximal interval $ 0 \leq t < T_f$ where $ 0 < T_f \leq \infty $.
 
 Let us consider $u$ is the solution of the inhomogeneous equation
 $ u_t = \triangle u + D_i \mb P (g(u(x,t))) $  and recall $ g(u)$ is quadratic in $u$. Thus, there is a constant $ C_g$ such that  we have the following:
 \begin{align}
 | g(u) | \leq C_g |u|^2, \ts |g_u(u) | \leq C_g |u|, \ts \text{for all} \ts u \in \mb R^n 
 \end{align}
 We first estimate the maximum norm of $u$.
 \bigskip
 
 \begin{Lemma}
 Let $C_g$ denote the constant in (3.1) and let $C$ denote the constant in (2.7); set $c_0 =\frac{1}{16C^2 C_g^2} $. Then we have $T_f > c_0/{ | f |_{\infty}^2 } $ and 
 \begin{align}
 | u (t) |_{\infty} < 2 | f |_{\infty} \ts \text{ for } \ts 0 \leq t < \frac{c_0}{|f|_{\infty}^2 }.
 \end{align}
 \end{Lemma}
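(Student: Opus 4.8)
The plan is to put the solution of (1.7)--(1.8) into Duhamel (integral) form and then run a continuity (bootstrap) argument on $|u(t)|_{\infty}$. First I would write
\[
u(t) = e^{\triangle t} f + \int_0^t e^{\triangle(t-s)} D_i \mb P\, g(u(s))\, ds,
\]
and observe that the integral term is exactly the solution of (2.6) with inhomogeneity $g$ replaced by $g(u)$. Bounding the first term by (2.3) and the integral term by Corollary 2.2 (estimate (2.7)), and then invoking the quadratic bound $|g(u)|_{\infty} \leq C_g |u|_{\infty}^2$ from (3.1), I obtain the a priori inequality
\[
|u(t)|_{\infty} \leq |f|_{\infty} + C C_g\, t^{1/2} \max_{0 \leq s \leq t} |u(s)|_{\infty}^2 .
\]

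Next I would set $M(t) := \max_{0 \leq s \leq t} |u(s)|_{\infty}$. Since the right-hand side above is nondecreasing in $t$, taking the supremum over $[0,t]$ closes the inequality as $M(t) \leq |f|_{\infty} + C C_g\, t^{1/2} M(t)^2$. I would then argue by contradiction. The function $M$ is continuous with $M(0) = |f|_{\infty} < 2|f|_{\infty}$, so if the claimed bound failed on $[0, c_0/|f|_{\infty}^2)$ there would be a first time $\bar t < c_0/|f|_{\infty}^2$ with $M(\bar t) = 2|f|_{\infty}$. Substituting this value gives $2|f|_{\infty} \leq |f|_{\infty} + 4 C C_g\, \bar t^{1/2} |f|_{\infty}^2$, hence $\bar t \geq 1/(16 C^2 C_g^2 |f|_{\infty}^2) = c_0/|f|_{\infty}^2$, contradicting the choice of $\bar t$. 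This forces $|u(t)|_{\infty} < 2|f|_{\infty}$ on all of $[0, \min(T_f, c_0/|f|_{\infty}^2))$; note that the precise value of $c_0$ is chosen exactly so that this arithmetic closes.

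Finally I would promote the a priori bound to the existence statement $T_f > c_0/|f|_{\infty}^2$ using the standard continuation criterion for the maximal interval: a smooth periodic solution can fail to continue past $T_f$ only if $|u(t)|_{\infty}$ becomes unbounded as $t \to T_f$. If $T_f \leq c_0/|f|_{\infty}^2$ held, the bound just established would keep $|u(t)|_{\infty}$ below $2|f|_{\infty}$ up to $T_f$, contradicting blow-up; so the solution extends and $T_f > c_0/|f|_{\infty}^2$.

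I expect the main obstacle to be the bootstrap step rather than the estimates. The delicate points are checking that $M(t)$ is genuinely continuous (so a first crossing time exists) and handling the quadratic inequality correctly: a bound of the form $M \leq a + b M^2$ also admits a large branch, so one must not read $M < 2|f|_{\infty}$ off the inequality directly — it is the continuity argument anchored at $t=0$ that selects the small branch. Tying the resulting bound to the extension of the solution also relies on the blow-up criterion for $T_f$, which I would cite rather than reprove.
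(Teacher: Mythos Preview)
Your proposal is correct and matches the paper's proof almost line for line: the paper likewise splits $u(t)$ via Duhamel, bounds the two pieces by (2.3) and (2.7), inserts the quadratic bound (3.1), and then runs the same first-crossing-time contradiction to force $t_0 \geq c_0/|f|_{\infty}^2$; the extension to $T_f > c_0/|f|_{\infty}^2$ is also argued via the blow-up criterion $\limsup_{t\to T_f}|u(t)|_{\infty}=\infty$. The only cosmetic difference is that the paper works directly with $|u(t_0)|_{\infty}$ at the first crossing time rather than introducing the running maximum $M(t)$, but the arithmetic and logic are identical.
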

 \begin{proof}
 Suppose (3.2) does not hold, then we can find the smallest time $t_0$ such that $ |u(t_0) |_{\infty} = 2 | f |_{\infty} $. Since $t_0$ is the smallest time so we have $ t_0 < c_0/|f|_{\infty}^2 $. Now by (2.3) and (2.7) we have 
 \begin{align*}
 2|f|_{\infty}  & = |u(t_0) |_{\infty} \\
 & \leq |f|_{\infty} + C t_0^{1/2} \mathop{\max}_{0\leq s \leq t_0 } | g(s)  |_{\infty} \\
 & \leq |f|_{\infty} + C C_g t_0^{1/2} \mathop{\max}_{0 \leq s \leq t_0}  | u (s) |_{\infty}^2  \\
 & \leq | f|_{\infty} + C C_g t_0^{1/2} 4 |f |_{\infty}^2 .
 \end{align*}
 This gives 
 \begin{align*}
 1 \leq 4 C C_g t_0^{1/2} | f |_{\infty}, 
 \end{align*}
 therefore $ t_0 \geq 1/{( 16 C^2 C_g^2 | f |_{\infty}^2)}={c_0}/{|f|_{\infty}^2} $ which is a contradiction. There (3.2) must hold. The estimate $ T_f > c_0/{| f |_{\infty}^2} $ is valid since $ \limsup_{t \to T_f} | u(t) |_{\infty} = \infty $ if $ T_f $ is finite.
 \end{proof}
 \bigskip
  Now we prove estimate (1.11)  of Theorem 1.2 by induction on $j$. Let $j \geq 1 $, and  assume 
 \begin{align}
 t^{k/2} | \mc D^k u (t) |_{\infty} \leq K_k | f|_{\infty}, \ts \text{for} \ts  0 \leq t \leq  \frac{c_0}{|f|_{\infty}^2}  \ts \text{and} \ts 0 \leq  k \leq j-1. 
 \end{align}
Let us apply  $ D^j$ to the equation $u_t = \triangle u + D_i \mb P g(u) $ to obtain 
 \begin{align*}
 v_t = \triangle v + D^{j+1} \mb Pg(u), \ts  v:= D^j u ,\\
 v(t) = D^j e^{\triangle t } f + \int_0^t e^{\triangle (t-s)} D^{j+1} (\mb P g(u))(s) ds .
 \end{align*}
 Using (2.4) we get 
 \begin{align}
 t^{j/2} |v(t)|_{\infty} \leq C |f|_{\infty} + t^{j/2} \biggl  |\int_0^t e^{\triangle (t-s)} D^{j+1} ( \mb P g (u))(s)   ds \biggr |_{\infty}.  
 \end{align}
We split the integral into 
 \begin{align*}
 \int_0^{t/2} + \int_{t/2}^t =: I_1 + I_2 
 \end{align*}
 and obtain 
\begin{align*}
 |I_1(t)|  & = \biggl | \int_0^{t/2} D^{j+1} e^{\triangle (t-s) } ( \mb P g(u))(s) ds \biggr |_{\infty} \\
 & \leq \int_0^{t/2} |D^{j+1} e^{\triangle (t-s)} (\mb P g (u )) (s) ds |_{\infty} ds .
 \end{align*}
 Using the inequality (2.5)  in Lemma 2.1, we get 
 \begin{align*}
 | I_1(t) |_{\infty} & \leq C  \int_0^{t/2} (t-s)^{-(j+1)/2} |g(u(s))|_{\infty} ds \\
 & \leq  C |f |_{\infty}^2 t^{(1-j)/2}. \\
 \end{align*}
 The integrand in $I_2$ has singularity at $s=t$. Therefore, we can move only one derivative from $D^{j+1} \mb P  g(u)$ to the heat semigroup.( If we move two or more derivatives then the singularity becomes non-integrable.) Thus, we have 
 \begin{align*}
 |I_2(t)|_{\infty}   = \biggl | - \int_{t/2}^t De^{\triangle (t-s)} (D^j  \mb P g(u))(s)  ds \biggr | _{\infty}. \nonumber 
 \end{align*}
 Since the Leray projector commutes with any order  derivatives, therefore 
 \begin{align*}
 |I_2(t) |_{\infty}  = \bigg| - \int_{t/2}^t D e^{\triangle (t-s) } ( \mb P D^j g (u))(s) ds \bigg|_{\infty}.
 \end{align*}
 If we use Lemma 2.1 for $j=1$, we obtain 
 \begin{align}
 |I_2(t) |_{\infty} \leq C \int_{t/2}^t (t-s)^{-1/2} |D^j g(u)(s) |_{\infty} ds. 
 \end{align}
 Since $g(u)$ is quadratic in $u$, therefore
 \begin{align*}
 |D^j  g(u) |_{\infty} \leq C | u|_{\infty} | \mc D^j u |_{\infty} +  \sum_{k=1}^{j-1} | \mc D^k u |_{\infty} | \mc D^{j-k} u |_{\infty} .
 \end{align*}
 By induction hypothesis (3.3) we obtain 
 \begin{align}
 \sum_{k=1}^{j-1} | \mc D^k u (s)  |_{\infty} | \mc D^{j-k} u (s)  |_{\infty} \leq C s^{-j/2} | f |_{\infty}^2. 
 \end{align}
 Expression in (3.5) can be estimated as below: 
 \begin{align*}
  |I_2(t) |_{\infty}  &\leq C \int_{t/2}^t (t-s)^{-1/2} \bigg(  C | u(s)|_{\infty} | \mc D^j u(s)  |_{\infty} +  \sum_{k=1}^{j-1} | \mc D^k u (s)  |_{\infty} | \mc D^{j-k} u (s)  |_{\infty}  \bigg) ds \\
&= J_1 + J_2. 
 \end{align*}
Using (3.6), and since  $ \int_{t/2}^t (t-s)^{-1/2} s^{-j/2} ds = C t^{(1-j)/2}$, where $C$ is independent of $t$, we obtain $|J_2(t) |_{\infty} \leq C | f |_{\infty}^2 t^{(1-j)/2}$.
 \medskip \\ 
 For $J_1$, we have 
 \begin{align*}
 | J_1(t) |_{\infty}  &= C \int_{t/2}^t (t-s)^{-1/2} |u(s)|_{\infty} | \mc D^j u(s) |_{\infty} ds  \\
 & \leq C |f|_{\infty} \int_{t/2}^t (t-s)^{-1/2} s^{-j/2} s^{j/2} |\mc D^j u (s) |_{\infty} ds \\
 & \leq C |f|_{\infty} t^{(1-j)/2} \mathop{\max}_{0 \leq s \leq t } \{ s^{j/2} \mc D^j u(s) |_{\infty} \}. 
 \end{align*}
 We use these bounds to bound the integral in (3.4). We have $v= D^j u $. Then maximizing the resulting estimate for $t^{j/2} |D^j u(t)|_{\infty}$ over all derivatives $D^j$ of order $j$ and setting 
\begin{align*}
\phi (t) := t^{j/2} |\mc D^j u(t)|_{\infty} 
\end{align*}
and from (3.4), we obtain the following estimate 
\begin{align*}
\phi(t) \leq C |f|_{\infty} +C t^{1/2} |f|_{\infty}^2 +C |f|_{\infty} t^{1/2} \mathop{\max}_{0\leq s \leq t} \phi (s) \ts \text{for} \ts 0 \leq t \leq \frac{c_0}{|f|_{\infty}^2 }.
\end{align*}
Since $t^{1/2} |f|_{\infty} \leq \sqrt{c_0} $ then $C t^{1/2}  |f|_{\infty}^2 \leq C \sqrt{c_0} |f|_{\infty} $. Therefore
\medskip
\begin{align}
\phi (t) \leq C_j |f|_{\infty} + C_j |f|_{\infty} t^{1/2} \mathop{\max}_{0\leq s \leq t} \phi (s)  \ts \text{for} \ts 0 \leq   t \leq c_0/{|f|_{\infty}^2 }.
\end{align}
Let us fix $C_j$ so that the above estimate holds, and set 
\begin{align*}
c_j = \min \bigg\{ c_0 , \frac{1}{4 C_j^2} \bigg\}.
\end{align*}
\medskip
First, let us prove the following
\begin{align*}
\phi(t) < 2 C_j |f|_{\infty}  \ts \text{for} \ts 0 \leq  t <  \frac{c_j }{|f|_{\infty}^2}.
\end{align*}
Suppose there is a smallest time  $t_0$ such that $ 0 < t_0 < c_j/{|f|_{\infty}^2 }  $ with $\phi(t_0) = 2C_j |f|_{\infty} $. Then using (3.7) we obtain
\begin{align*}
2 C_j |f|_{\infty} = \phi (t_0) \leq C_j |f|_{\infty} + 2 C_j^2  |f|_{\infty}^2 t_0^{1/2} ,
\end{align*}
thus 
\begin{align*}
1\leq 2 C_j |f|_{\infty} t_0^{1/2}  \ts \text{gives} \ts t_0 \geq c_j/{|f|_{\infty}^2}  
\end{align*}
which contradicts the assertion. Therefore, we proved the estimate 
\begin{align}
t^{j/2} |\mc D^j u(t) | _{\infty} \leq 2 C_j |f|_{\infty}   \ts \text{for} \ts 0 \leq  t \leq  c_j/{|f|_{\infty}^2 }.
\end{align}
If 
\begin{align}
T_j:= \frac{c_j}{|f|_{\infty}^2} < t \leq \frac{c_0}{|f|_{\infty}^2}=: T_0
\end{align}
then we start the corresponding estimate at $t-T_j$. Using Lemma 3.1, we have $|u(t-T_j)|_{\infty} \leq 2 |f|_{\infty}$ and obtain 
\begin{align}
T_j^{j/2} |\mc D^j u(t) |_{\infty} \leq 4 C_j |f|_{\infty} . 
\end{align}
Finally, for any $t$ satisfying (3.9) 
\begin{align*}
t^{j/2} \leq T_0^{j/2} = \bigg( \frac{c_0}{c_j} \bigg)^{j/2} T_j^{j/2}
\end{align*}
and (3.10) yield 
\begin{align*}
t^{j/2} |\mc D^j u(t) |_{\infty} \leq 4 C_j  \bigg( \frac{c_0}{c_j} \bigg)^{j/2} |f|_{\infty}.
\end{align*}
This completes the proof of Theorem 1.2.

\section{Estimates For the Navier-Stokes Equations}
\bigskip

Recall the transformed abstract ordinary differential equation (1.6)
\begin{align}
u_t = \triangle u - \mb P (u \cdot \nabla  u), \ts \nabla \cdot u = 0 
\end{align}
with 
\begin{align}
   u(x,0)=f(x).
\end{align}
Solution of (4.1) and (4.2) is given by 
\begin{align}
u(t) = e^{\triangle t } f -\int_0^t  e^{ \triangle (t-s) } \mb P(u \cdot \nabla u)(s)  ds .
\end{align}
\medskip 

%Follow the discussion of the introduction section of this paper and paper by Giga and others \cite{Giga} for the details on the existence of the solution of (4.3), consequently on the existence of the solution of (4.1) and (4.2). 
Using (4.3) with previous  estimates  (2.3), (2.4) and (2.5), we prove the following lemma.
\bigskip

\begin{Lemma}
Set 
 \begin{align}
 V(t)= |u(t)|_{\infty} + t^{1/2} |\mc D u(t) |_{\infty} , \ts 0 <  t < T(f). 
 \end{align}

 There is a constant $C>0$, independent of $t$ and $f$, so that 
 \begin{align}
 V(t) \leq C|f|_{\infty} + C t^{1/2} \mathop{\max}_{0 \leq s \leq t }{V^2(s)}  , \ts 0 < t < T(f). 
 \end{align}
 \end{Lemma}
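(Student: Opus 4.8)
The plan is to estimate $V(t)=|u(t)|_\infty+t^{1/2}|\mc D u(t)|_\infty$ directly from the integral representation (4.3), treating the two pieces of $V$ separately and then adding them. Throughout I will use that $u\cdot\nabla u=\sum_i D_i(u_i u)$ by the incompressibility condition $\nabla\cdot u=0$, so that each term in the nonlinearity has the form $D_i\mb P(u_i u)$, exactly the structure for which Lemma 2.1 (commuting one derivative onto the heat semigroup and bounding the half-power singularity) applies. I will also use the elementary pointwise bound $|(u\cdot\nabla u)(s)|\le C|u(s)|_\infty\,|\mc D u(s)|_\infty$, which in terms of $V$ reads $|(u\cdot\nabla u)(s)|_\infty\le C\,s^{-1/2}V^2(s)$.

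First I would estimate $|u(t)|_\infty$. From (4.3) and (2.3), $|u(t)|_\infty\le|f|_\infty+\int_0^t|e^{\triangle(t-s)}\mb P(u\cdot\nabla u)(s)|_\infty\,ds$. Writing the nonlinearity as $\sum_i D_i\mb P(u_iu)$, commuting the single derivative $D_i$ onto the semigroup, and applying Lemma 2.1 with $j=1$ gives a factor $(t-s)^{-1/2}$; using $|u_iu(s)|_\infty\le C V^2(s)$ (here no $s^{-1/2}$ is needed since this bound only involves $|u|_\infty^2\le V^2$) yields
\begin{align*}
|u(t)|_\infty\le|f|_\infty+C\max_{0\le s\le t}V^2(s)\int_0^t(t-s)^{-1/2}\,ds\le|f|_\infty+C\,t^{1/2}\max_{0\le s\le t}V^2(s).
\end{align*}

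Next I would estimate $t^{1/2}|\mc D u(t)|_\infty$, which is the harder part. Applying $D$ to (4.3) gives $\mc D u(t)=\mc D e^{\triangle t}f-\int_0^t \mc D\,e^{\triangle(t-s)}\mb P(u\cdot\nabla u)(s)\,ds$; the first term is bounded by $C\,t^{-1/2}|f|_\infty$ via (2.4), contributing $C|f|_\infty$ after multiplying by $t^{1/2}$. For the integral I would split $\int_0^t=\int_0^{t/2}+\int_{t/2}^t$, just as in the proof of Theorem 1.2. On $[0,t/2]$ the kernel is nonsingular, so I can afford to move both the extra $D$ and the $D_i$ from the nonlinearity onto the semigroup, using Lemma 2.1 with $j=2$ to get $(t-s)^{-1}$ and bounding $|u_iu(s)|_\infty\le C V^2(s)$. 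On $[t/2,t]$ the singularity at $s=t$ is delicate: I may move only one derivative onto the semigroup (moving two makes the singularity non-integrable), so I keep one derivative on the data and write the nonlinearity as $D_i\mb P(u_iu)$ with the remaining $D$ acting as $\mc D(u_iu)$; then Lemma 2.1 with $j=1$ gives $(t-s)^{-1/2}$ against $|\mc D(u_iu)(s)|_\infty\le C|u(s)|_\infty|\mc D u(s)|_\infty\le C\,s^{-1/2}V^2(s)$. The main obstacle is precisely this near-diagonal piece: one must check that $\int_{t/2}^t(t-s)^{-1/2}s^{-1/2}\,ds$ stays bounded (it equals a constant independent of $t$ on $[t/2,t]$ since $s\sim t$ there), so that after multiplying by $t^{1/2}$ one recovers a clean $C\,t^{1/2}\max_{0\le s\le t}V^2(s)$ rather than a divergent or $t$-dependent constant. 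Adding the two estimates for $|u(t)|_\infty$ and $t^{1/2}|\mc D u(t)|_\infty$ and absorbing all constants into a single $C$ gives (4.5).
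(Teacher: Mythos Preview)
Your proposal is correct, but for the gradient piece you take a more elaborate route than the paper. The paper does \emph{not} split the integral $\int_0^t$ into $\int_0^{t/2}+\int_{t/2}^t$ here. Instead it leaves the nonlinearity in the form $u\cdot\nabla u$ (rather than rewriting it as $\sum_i D_i(u_iu)$), moves only the single outer derivative $D_i$ onto the semigroup via Lemma~2.1 with $j=1$, and bounds the integrand by $C(t-s)^{-1/2}|u(s)|_\infty|\mc D u(s)|_\infty = C(t-s)^{-1/2}s^{-1/2}\bigl(s^{1/2}|u(s)|_\infty|\mc D u(s)|_\infty\bigr)$; then the full Beta-type integral $\int_0^t(t-s)^{-1/2}s^{-1/2}\,ds$ is a constant independent of $t$, and the remaining factor is dominated by $\max_{0\le s\le t}V^2(s)$. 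Multiplying by $t^{1/2}$ gives (4.8) directly, with no splitting. Your splitting argument (borrowed from the proof of Theorem~1.2) also works---$\int_0^{t/2}(t-s)^{-1}\,ds=\ln 2$ on the far piece and $\int_{t/2}^t(t-s)^{-1/2}s^{-1/2}\,ds\le C$ on the near piece---but it is unnecessary at order $j=1$: since the nonlinearity already carries one derivative on $u$, only one derivative ever needs to be absorbed by the kernel, so the singularity is integrable over all of $[0,t]$. The splitting becomes essential only for $j\ge 2$, which is why the paper reserves it for the inductive argument in Section~3. For the zeroth-order piece $|u(t)|_\infty$ your estimate is essentially identical to (indeed slightly cleaner than) the paper's.
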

  \begin{proof} Using estimate (2.3)  of the heat equation  in (4.3), we obtain
 \begin{align*}
 |u(t)|_{\infty}  & \leq |f|_{\infty} + \bigg| \int_0^t e^{\triangle(t-s) }  \mb P (u \cdot \nabla u )(s) ds \bigg|_{\infty}. 
 \end{align*}
 Apply identity $\mb P (u\cdot \nabla u ) = \sum_i D_i \mb P (u_i u )  $ with  the fact,  heat semi-group commutes with $D_i$, then use of inequality (2.5) in  Lemma 2.1 for $j=1$ to proceed
\begin{align*}
 | u ( t)|_{\infty} & \leq  |f|_{\infty}  + C \int_0^t (t-s)^{-1/2} |u(s)|_{\infty}^2 ds \\
 &  =  |f|_{\infty} + C \int_0^t (t-s)^{-1/2}  s^{-1/2} s^{1/2} |u(s)|_{\infty}^2 ds \\
& \leq |f|_{\infty} + C \mathop{\max}_{0\leq s \leq t } \{ s^{1/2} |u(s)|_{\infty}^2  \} \int_0^t (t-s)^{-1/2}  s^{-1/2} ds.
\end{align*}
Since $  \int_0^t (t-s)^{-1/2}  s^{-1/2} ds = C>0 $, which is independent of $t$,  we have the following estimate 
 \begin{align}
 |u(t)|_{\infty}  & \leq | f|_{\infty} +  C  \mathop{\max}_{0\leq s \leq t } \{ s^{1/2} |u(s)|_{\infty}^2 \}  \nonumber \\
 | u(t)|_{\infty}  & \leq |f|_{\infty} + C t^{1/2}  \mathop{\max}_{0\leq s \leq t }{V^2(s) }. 
 \end{align}
 Apply $D_i$ to (4.1), and the {\it Duhamel's principle}  to obtain
% \medskip 
%\begin{align*}
% v_t = \triangle v - D_i \mb P (u \cdot \nabla ) u , \ts v := D_i u. 
% \end{align*}
% Then 
 \begin{align}
 v(t) = D_i e^{\triangle t} f - \int_0^t e^{ \triangle (t-s)  } D_i \mb P (u \cdot \nabla ) u(s) ds .
 \end{align}
%Since $D_i $ commutes with heat semi-group, therefore we can move $D_i$ to $e^{\triangle t } $ and obtain
%Using (2.4) of Lemma 2.2 for $j=1$ one more time to  proceed  
  %\begin{align}
%v(t) = D_i e^{\triangle t }  f - \int_0^t D_i e^{(t-s) \triangle}  \mb P (u \cdot \nabla ) u(s) ds 
%\end{align}
We can estimate the integral in (4.7) using Lemma 2.1 for $j=1$ in the following way: 
\begin{align*}
  \bigg|\int_0^t D_i e^{ \triangle (t-s)}  \mb P (u \cdot \nabla  u)(s) ds \bigg| &  \leq  \int_0^t |D_i e^{ \triangle  (t-s)}  \mb P(u\cdot \nabla u) (s) | ds \\
 & \leq C \int_0^t (t-s)^{-1/2} |u(s)|_{\infty} | \mc D u(s)|_{\infty} ds \\
 &= C \int_0^t (t-s)^{-1/2} s^{-1/2} s^{1/2} |u(s)|_{\infty} |\mc D u(s) |_{\infty} ds \\
 & \leq C \mathop{\max}_{0 \leq s \leq t } { \{s^{1/2} | u(s) |_{\infty} |\mc D u(s)|_{\infty}\}  }   \int_0^t (t-s)^{-1/2}  s^{-1/2} ds \\
 & \leq C \mathop{\max}_{0\leq s \leq t } { \{ |u(s)|_{\infty}^2 + s | \mc D u(s) |_{\infty}^2 \} } .
\end{align*}
Therefore,  using (2.4)  $j=1$ in expression  (4.7), we arrive at 
\begin{align}
|v(t) |_{\infty}  & \leq C t^{-1/2} |f|_{\infty} + C  \mathop{\max}_{0\leq s \leq t } { \{ |u(s)|_{\infty}^2 + s | \mc D u(s) |_{\infty}^2 \} }  \nonumber  \\
t^{1/2} |\mc D u(t) |_{\infty}  & \leq C |f|_{\infty} + C t^{1/2}  \mathop{\max}_{0\leq s \leq t } {V^2(t) }. 
\end{align}
 Using  (4.6) and (4.8), we have proved Lemma 4.1. 
\end{proof}
\bigskip

\begin{Lemma}
Let $C>0$  denote the constant in estimate (4.5) and set 
\begin{align*}
c_0 = \frac{1}{16C^4}.
\end{align*}
Then $T_f  > c_0 /{|f|_{\infty}^2}$  and 
\begin{align}
| u(t) |_{\infty} + t^{1/2} | \mc D u (t) |_{\infty} < 2C |f|_{\infty} \ts  \text{for}  \ts 0 \leq t < \frac{c_0}{|f|_{\infty}^2}. 
\end{align}
\end{Lemma}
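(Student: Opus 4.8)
The plan is to run exactly the continuity (contradiction) argument already used for Lemma 3.1, now driven by the quadratic inequality (4.5) for the combined quantity $V(t)=|u(t)|_{\infty}+t^{1/2}|\mc D u(t)|_{\infty}$. First I would record that $V$ is continuous on the interval of existence and that $V(0)=|f|_{\infty}$; since the constant $C$ in (4.5) may be taken $\geq 1$, we have $V(0)<2C|f|_{\infty}$, so the strict inequality (4.9) already holds at $t=0$ and there is room to propagate it forward.

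Next, assume (4.9) fails and let $t_0$ be the smallest time in $(0,c_0/|f|_{\infty}^2)$ with $V(t_0)=2C|f|_{\infty}$; by minimality $V(s)<2C|f|_{\infty}$ for $0\le s<t_0$, so $\max_{0\le s\le t_0}V(s)=2C|f|_{\infty}$ and hence $\max_{0\le s\le t_0}V^2(s)=4C^2|f|_{\infty}^2$. Feeding this into (4.5) at $t=t_0$ gives
\[
2C|f|_{\infty}=V(t_0)\le C|f|_{\infty}+Ct_0^{1/2}\cdot 4C^2|f|_{\infty}^2,
\]
and after subtracting $C|f|_{\infty}$ and dividing by $C|f|_{\infty}$ this reduces to $1\le 4C^2 t_0^{1/2}|f|_{\infty}$, i.e. $t_0\ge 1/(16C^4|f|_{\infty}^2)=c_0/|f|_{\infty}^2$. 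This contradicts $t_0<c_0/|f|_{\infty}^2$, so (4.9) must hold throughout the interval. The choice $c_0=1/(16C^4)$ is reverse-engineered precisely so that this contradiction closes with equality at the boundary.

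Finally, for the lower bound on the existence time I would invoke the standard blow-up criterion: if $T_f$ were finite then $\limsup_{t\to T_f}|u(t)|_{\infty}=\infty$. But (4.9) bounds $|u(t)|_{\infty}\le V(t)<2C|f|_{\infty}$ on $[0,c_0/|f|_{\infty}^2)$, so the solution cannot blow up before $c_0/|f|_{\infty}^2$; hence $T_f>c_0/|f|_{\infty}^2$.

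The only genuinely delicate points, and where I would concentrate the care, are (i) justifying that $V$ is continuous up to the crossing so that a \emph{first} time $t_0$ exists and the running maximum is attained exactly at $t_0$, and (ii) the bookkeeping ensuring $C\ge 1$ so that the base point $t=0$ already satisfies the strict bound. Everything after that is the same one-line algebra that appeared in Lemma 3.1, so I expect no essential difficulty beyond these two preliminaries.
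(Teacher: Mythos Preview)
Your proposal is correct and follows essentially the same contradiction argument as the paper: assume a first crossing time $t_0$ with $V(t_0)=2C|f|_{\infty}$, plug the running maximum into (4.5), and derive $t_0\ge c_0/|f|_{\infty}^2$. You are in fact more careful than the paper on the preliminaries (continuity of $V$, the need for $C\ge 1$ at $t=0$, and the explicit appeal to the blow-up criterion for $T_f$), and your algebra $1\le 4C^2 t_0^{1/2}|f|_{\infty}$ is the correct reduction---the paper's printed line ``$1\le 4C^2 t_0^{1/2}|f|_{\infty}^2$'' carries a typo in the exponent of $|f|_{\infty}$.
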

\begin{proof}
We prove this lemma by contradiction after recalling the definition of $V(t)$ in (4.4). Suppose that (4.9) does not hold, then denote by $t_0$ the smallest time with $V(t_0)= 2C |f|_{\infty}$. Use (4.5)  to obtain 
\begin{align*}
2C| f |_{\infty}  & = V(t_0) \\
&  \leq C |f|_{\infty} + C t_0^{1/2} 4 C^2 | f |_{\infty}^2, 
\end{align*}
thus 
\begin{align*}
1 \leq 4 C^2 t_0^{1/2} | f |_{\infty}^2 ,
\end{align*}
therefore $t_0 \geq c_0/ | f |_{\infty}^2$. This contradiction proves (4.9) and $T_f > c_0/{|f |_{\infty}^2}$.
\end{proof}
\medskip 
Lemma 4.2 proves Theorem 1.1 for $j=0$ and $j=1$. By an induction argument as in the proof of Theorem 1.2 one proves Theorem 1.1 for any $j =0,1,\cdots $

\section{Remarks}
\bigskip

We can apply estimate (1.5) of Theorem 1.1  for 
\begin{align}
\frac{c_0}{2 | f |_{\infty}^2} \leq t \leq \frac{c_0}{ | f |_{\infty}^2}
\end{align}
and obtain 
\begin{align}
| \mc D^j u (t) |_{\infty} \leq C_j | f |_{\infty}^{j+1} 
\end{align}
 in interval (5.1). Starting the estimate at $ t_0 \in [0, T_f) $ we have 
 \begin{align}
 | \mc D^j u ( t_0 + t ) |_{\infty} \leq C_j | u(t_0) |_{\infty}^{j+1} 
 \end{align}
 for 
 \begin{align}
 \frac{c_0}{2 | u(t_0)|_{\infty}^2} \leq t \leq \frac{c_0}{ | u(t_0)|_{\infty}^2}.
 \end{align}
Then, if $t_1$ is fixed with 
\begin{align}
\frac{c_0}{2 | f|_{\infty}^2} \leq t_1 < T_f,
\end{align}
we can maximize both sides of (5.3) over $ 0 \leq t_0 \leq t_1$ and obtain
\begin{align}
\max \bigg\{ | \mc D^j u(t) |_{\infty } : \frac{c_0}{2 | f|_{\infty}^2} \leq t \leq t_1+ \tau \bigg\} \leq C_j \max \{ | u(t) |_{\infty}^{j+1} : 0 \leq t \leq t_1 \} 
\end{align}
with 
\begin{align*}
\tau = \frac{c_0}{| u (t_1) |_{\infty}^2} 
\end{align*}
\medskip \\
Estimate (5.6) says, essentially, that the maximum of the $j$-th derivatives of $u$ measured by $ | \mc D^j u|_{\infty} $ , can be bounded in terms of $ | u |_{\infty}^{j+1} $.
% Clearly, a time interval near $t=0$ has to be excluded on the left-hand side of (5.6) for smoothing to become effective. 
The positive value of $\tau$ on the left-hand side of (5.6) shows that $ |u|_{\infty}^{j+1}$ controls $ | \mc D^j u |_{\infty}$ for some time into the future. 
\medskip  \\
As is well  known, if $(u, p)$ solves the Navier-Stokes equations and $ \lambda >0$ is any scaling parameter, then the functions $ u_{\lambda}, p_{\lambda} $ defined by 
\begin{align*}
u_{\lambda} (x,t) = \lambda u( \lambda x, \lambda^2 t) , \ts  p_{\lambda} ( x,t) = \lambda^2 p( \lambda x , \lambda^2 t ) 
\end{align*}
also solve the Navier-Stokes equations. Clearly,
\begin{align*}
| u_{\lambda} (t) |_{\infty} = \lambda | u ( \lambda^2 t)|_{\infty} , \ts | \mc D^j u_{\lambda} (t) |_{\infty} = \lambda^{j+1} | \mc D^j u ( \lambda^2 t ) |_{\infty}. 
\end{align*}
Therefore, $ | \mc D^j u |_{\infty} $ and $ | u|_{\infty}^{j+1} $ both scale like $ \lambda^{j+1} $, which is, of course, consistent with the estimate (5.6). We do not know under what assumptions $ | u |_{\infty}^{j+1} $ can conversely be estimated in terms of $ | \mc D^j u |_{\infty} $.
\bigskip

\end{document}